\newtheorem{theorem}{Theorem} 
\newtheorem{lemma}[theorem]{Lemma}
\newtheorem*{rem}{Remarks}
\newtheorem*{exam}{Examples}
\def\Z{\mathbbm{Z}}
\def\g{g^*}
\def\n{n^*}
\title{An Extreme Family of Generalized Frobenius Numbers}
\author{Matthias Beck}
\address{Department of Mathematics\\
         San Francisco State University\\
         San Francisco, CA 94132\\
         U.S.A.}
\email{[matthbeck,curtis.kifer]@gmail.com}
\author{Curtis Kifer}
\subjclass[2000]{11D07, 05A17}
\keywords{Linear Diophantine problem of Frobenius, coin-exchange problem, Frobenius number.}
\thanks{Research partially supported by the NSF (research grant DMS-0810105). We thank the anonymous referee for helpful suggestions to an earlier version of this paper.}
\date{27 January 2011}
\begin{document}

\begin{abstract}
We study a generalization of the \emph{Frobenius problem}: given $k$ positive relatively prime integers, what is the largest integer $g_0$ that cannot be represented as a nonnegative integral linear combination of the given integers? More generally, what is the largest integer $g_s$ that has exactly $s$ such representations?
We construct a family of integers, based on a recent paper by Tripathi, whose generalized Frobenius numbers $g_0, \ g_1, \ g_2, \dots$ exhibit unnatural jumps; namely, $g_0, \ g_1, \ g_k, \ g_{ \binom{ k+1 }{ k-1 } } , \ g_{ \binom{ k+2 }{ k-1 } } , \dots$ form an arithmetic progression, and any integer larger than $g_{ \binom{ k+j }{ k-1 } }$ has at least $\binom{ k+j+1 }{ k-1 }$ representations. Along the way, we introduce a variation of a generalized Frobenius number and prove some basic results about it.
\end{abstract}

\maketitle


\section{Introduction}

Given positive relatively prime integers $a_1, a_2, \dots, a_k$ (i.e., $\gcd(a_1, a_2, \dots, a_k) = 1$),
we call $x \in \Z_{ \ge 0 }$ \emph{representable} if 
\begin{equation}\label{repequ}
  x = m_1 a_1 + m_2 a_2 + \dots + m_k a_k
\end{equation} 
for some $m_1, m_2, \dots, m_k \in \Z_{ \ge 0 }$.
The \emph{linear Diophantine problem of Frobenius} asks for the largest integer $g = g \left( a_1, a_2, \dots, a_k \right)$ (the \emph{Frobenius number}) that is not representable; this is arguably one of the most famous problems in combinatorial number theory.
Its study was initiated in the 19th century, and it has been known at least since 1884 that 
\begin{equation}\label{gfor2}
  g(a_{1}, a_{2}) = a_{1} a_{2} - a_1 - a_2 \, .
\end{equation} 
(Sylvester's paper \cite{sylvester} gives a clear indication that he knew \eqref{gfor2}.)
One fact that makes the Frobenius problem attractive is that it can be easily
described, for example, in terms of coins of denominations $a_1, a_2, \dots, a_k$; the 
Frobenius number is the largest amount of money that cannot be formed using these coins. 
For details about the Frobenius problem, including numerous applications, see~\cite{ramirezbook}.

Recent work on the Frobenius problem includes what some authors call the \emph{$s$-Frobenius number} $g_s = g_s \left( a_1, a_2, \dots, a_k \right)$: the largest integer that has exactly $s$ representations. 
(To say that \emph{$x$ has $s$ representations} means that \eqref{repequ} has $s$ solutions $\left( m_1, m_2, \dots, m_k \right) \in \Z_{ \ge 0 }^k$.)
Note that $g = g_0$.
For example, Beck--Robins \cite{frobnote} showed that
\begin{equation}\label{gsfor2}
  g_s(a_1, a_2) = (s+1) a_1 a_2 - a_1 - a_2 \, ,
\end{equation} 
which generalizes \eqref{gfor2}.
Inspired by computational evidence in \cite{brownetalfrobenius}, Shallit and Stankewicz \cite{shallitstankewiczfrobenius} constructed an ``extreme" family of 5-tuples, namely one for which $g_0 - g_s$ is unbounded (for any $s \ge 1$), as well as examples with $g_0 > g_1$ for $k \ge 6$, contrary to the ``natural" order $g_0 < g_1 < g_2 < \cdots$ one might expect.

The goal of this paper is to exhibit another ``extreme" family of $k$-tuples, yet whose behavior differs from that described in \cite{shallitstankewiczfrobenius}. The easiest way to state our main result is to introduce a variation of the $s$-Frobenius number, which might be of independent interest:
we define $\g_s = \g_s \left( a_1, a_2, \dots, a_k \right)$ to be the largest integer that has \emph{at most} $s$ representations.
Note that $\g_0 = g_0 = g$ and $g_s \le \g_s$.
Our main result is as follows.

\begin{theorem}\label{maintheorem}
Let $a_{1},a_{2}, \dots,a_{k}$ be pairwise relatively prime positive integers,
\[
  \Pi := a_1 a_2 \cdots a_k \, , \qquad
  A_{j}:=\tfrac{\Pi}{a_{j}} \, , \qquad
  \Sigma := A_1 + A_2 + \dots + A_k \, , 
\]
and let $g_s  = g_s \left( A_1, A_2, \dots, A_k \right)$ and $\g_s  = \g_s \left( A_1, A_2, \dots, A_k \right)$.
Then for $t \ge 1$,
\[
  g_{ \binom{ k+t-2 }{ k-1 } } = \g_{ \binom{ k+t-2 }{ k-1 } } = \g_{ \binom{ k+t-2 }{ k-1 } + 1 } = \g_{ \binom{ k+t-2 }{ k-1 } + 2 } = \dots = \g_{ \binom{ k+t-1 }{ k-1 } - 1 } 
  = (k+t-1) \Pi - \Sigma \, .
\]
In words: the integer $(k+t-1) \Pi - \Sigma$ has exactly $\binom{ k+t-2 }{ k-1 }$ representations, and every larger integer has at least $\binom{ k+t-1 }{ k-1 }$ representations.
\end{theorem}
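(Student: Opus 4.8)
The engine of the whole argument is an exact count of the representations of an arbitrary $N \in \Z_{\ge 0}$ by $A_1, \dots, A_k$, which I would isolate and prove first as a lemma. The structural facts in play are that $a_j A_j = \Pi$ for every $j$ and that $A_j = \prod_{i \ne j} a_i$ is invertible modulo $a_j$ — this is exactly where pairwise coprimality of the $a_i$ is used. Given a representation $N = \sum_i m_i A_i$, reducing modulo $a_j$ kills every term but the $j$-th (since $a_j \mid A_i$ for $i \ne j$), so $m_j \equiv N A_j^{-1} \pmod{a_j}$; hence $m_j$ is forced to have the form $r_j + a_j n_j$ with $n_j \in \Z_{\ge 0}$, where $r_j \in \{0, 1, \dots, a_j - 1\}$ is the residue of $N A_j^{-1}$ modulo $a_j$. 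Substituting and using $a_j A_j = \Pi$ collapses the equation to $\Pi(n_1 + \dots + n_k) = N - \sum_j r_j A_j$. The right-hand side is divisible by $\Pi$ (it is $\equiv 0$ modulo each $a_j$, hence modulo $\Pi$ by the Chinese Remainder Theorem), so set $M(N) := \tfrac{1}{\Pi}\bigl(N - \sum_j r_j A_j\bigr) \in \Z$. Then $N$ is representable precisely when $M(N) \ge 0$, in which case the number of representations is the number of $(n_1, \dots, n_k) \in \Z_{\ge 0}^k$ with $n_1 + \dots + n_k = M(N)$, namely $\binom{M(N) + k - 1}{k - 1}$.

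With the lemma in hand I would first evaluate at $N_0 := (k+t-1)\Pi - \Sigma$. Because $\Pi \equiv 0$ and $\Sigma \equiv A_j \pmod{a_j}$, we have $N_0 \equiv -A_j \pmod{a_j}$, so $r_j \equiv -1 \pmod{a_j}$, i.e.\ $r_j = a_j - 1$ for all $j$; therefore $\sum_j r_j A_j = \sum_j (a_j - 1) A_j = k\Pi - \Sigma$ and $M(N_0) = t - 1$. The lemma gives that $N_0$ has exactly $\binom{k+t-2}{k-1}$ representations.

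Next I would show that every integer $N > N_0$ has at least $\binom{k+t-1}{k-1}$ representations. For every $N$ one has $\sum_j r_j A_j \le \sum_j (a_j - 1) A_j = k\Pi - \Sigma \le N_0$ (using $t \ge 1$), so any $N > N_0$ is representable and $M(N) \ge \tfrac{1}{\Pi}\bigl(N - (k\Pi - \Sigma)\bigr) > \tfrac{1}{\Pi}\bigl(N_0 - (k\Pi - \Sigma)\bigr) = t - 1$; since $M(N)$ is an integer this forces $M(N) \ge t$, whence the representation count is at least $\binom{t + k - 1}{k-1} = \binom{k+t-1}{k-1}$. As $\binom{k+t-1}{k-1} > \binom{k+t-2}{k-1}$, no integer above $N_0$ has exactly $\binom{k+t-2}{k-1}$ representations, so $g_{\binom{k+t-2}{k-1}} = N_0$; and for every $s$ with $\binom{k+t-2}{k-1} \le s \le \binom{k+t-1}{k-1} - 1$ the integer $N_0$ has at most $s$ representations while every larger integer has strictly more, so $\g_s = N_0$. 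This is exactly the asserted chain of equalities, and $N_0 = (k+t-1)\Pi - \Sigma$ is its common value.

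The one genuinely delicate point is the counting lemma — specifically the observation that each coordinate $m_j$ is pinned down modulo $a_j$ by the special form $A_j = \Pi/a_j$; once that is in place, everything downstream is binomial-coefficient bookkeeping. (Note that the representation count of any integer is necessarily one of the numbers $\binom{k+t-2}{k-1}$, $t \ge 1$, so $g_s$ is only meaningful on this sparse set, which is precisely why the variant $\g_s$ is the natural device for interpolating between the jumps.)
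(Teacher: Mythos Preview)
Your proof is correct and in fact cleaner than the paper's. The paper argues by induction on $t$, taking Tripathi's formula \eqref{tripathieq} for $g_0$ as the base case; at each step it verifies that all representations of $(k+t-1)\Pi-\Sigma$ have every coefficient $\equiv -1 \pmod{a_j}$ (else subtracting some $a_iA_i=\Pi$ would contradict the inductive hypothesis), and it obtains the lower bound for larger $n$ by writing $n-t\Pi>g_0$ and then distributing $t$ increments of $\Pi$ across the $k$ coordinates. You instead prove a single structural lemma---for \emph{every} $N$ the representation count is $\binom{M(N)+k-1}{k-1}$ with $M(N)=\tfrac1\Pi\bigl(N-\sum_j r_jA_j\bigr)$---and then both halves of the theorem drop out by evaluating $M$ at $N_0$ and bounding it above $N_0$. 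This is a genuinely different route: it is non-inductive, it does not take Tripathi's theorem as input (indeed $M=-1$ at $(k-1)\Pi-\Sigma$ recovers it), and it explains \emph{a priori} why every representation count in this family is a binomial coefficient $\binom{m+k-1}{k-1}$, which the paper's proof shows only along the special sequence. The paper's approach is a bit more elementary in that it never invokes inverses modulo $a_j$ or CRT explicitly, but your congruence argument is exactly the mechanism lurking behind its ``$x_j\not\equiv -1\pmod{a_j}$'' case analysis.
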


This theorem extends \eqref{gsfor2}, which is the case $k=2$ (where we have $A_1 = a_2$ and $A_2 = a_1$).
Theorem \ref{maintheorem} also generalizes---and was motivated by---Tripathi's theorem \cite{tripathifrobeniusextension}, which is the case $t=0$:
\begin{equation}\label{tripathieq}
  g_0 = (k-1) \Pi - \Sigma \, .
\end{equation}
For this specific family of $k$-tuples, Theorem \ref{maintheorem} gives
\begin{align*}
  g_1 &= k \, \Pi - \Sigma, \text{ beyond which all integers have at least $k = \tbinom{ k }{ k-1 } $ representations; } \\
  g_k &= (k+1) \Pi - \Sigma, \text{ beyond which all integers have at least $\tbinom{ k+1 }{ k-1 } $ representations; } \\
  g_{ \binom{ k+1 }{ k-1 } } &= (k+2) \Pi - \Sigma, \text{ beyond which all integers have at least $\tbinom{ k+2 }{ k-1 } $ representations; etc. }
\end{align*}

We prove Theorem \ref{maintheorem} in Section \ref{mainproofsection}. 
Returning to the general case, in Section \ref{lemmasection} we extend a well-known lemma on the Frobenius number due to Brauer and Shockley \cite{brauershockley}, which has been successfully applied in numerous proofs dealing with certain instances of the Frobenius problem. Our generalization gives a lemma for $\g_s$; we do not know if a similar lemma exists for $g_s$. This might be an indication that $\g_s$ (compared with $g_s$) is the more useful generalized Frobenius number.


\section{Proof of Theorem \ref{maintheorem}}\label{mainproofsection}

We claim that for $t \ge 1$, all representations of $(k+t-1) \Pi - \Sigma$ are of the form
\begin{equation}\label{star3}
  \left( (n_1 + 1) a_1 - 1 \right) A_1 + \left( (n_2 + 1) a_2 - 1 \right) A_2 + \dots + \left( (n_k + 1) a_k - 1 \right) A_k \, ,
\end{equation}
where $n_1, n_2, \dots, n_k$ are nonnegative integers that sum to $t-1$.
This gives the desired number of $\binom{ k+t-2 }{ k-1 }$ representations, and we conclude Theorem \ref{maintheorem} by proving that every integer larger than $(k+t-1) \Pi - \Sigma$ has at least $\binom{ k+t-1 }{ k-1 }$ representations.
It will be handy to remember that the coefficient of $A_j$ in \eqref{star3} satisfies 
\begin{equation}\label{starf}
  (n_j + 1) a_j - 1 \equiv -1 \pmod{a_j} \, .
\end{equation}

\begin{proof}[Proof of Theorem \ref{maintheorem}]
We use induction on $t$. 
In the case $t=1$, note that $k \, \Pi - \Sigma$ is representable:
\[
  k \, \Pi - \Sigma = \left( a_1 - 1 \right) A_1 + \left( a_2 - 1 \right) A_2 + \dots + \left( a_k - 1 \right) A_k \, .
\]
Any other representation
\[
  x_1 A_1 + x_2 A_2 + \dots + x_k A_k
\]
would necessarily contain an $x_j$ that is at least $a_j$, and thus
\[
  (k-1) \Pi - \Sigma =
  x_1 A_1 + x_2 A_2 + \dots + \left( x_j - a_j \right) A_j + \dots + x_k A_k
\]
would be representable, contradicting~\eqref{tripathieq}.

Now we prove that if $n > k \, \Pi - \Sigma$, then $n$ has at least $k$ representations.
Since $n > g_0 + \Pi$, there is a representation
$
  x_1 A_1 + x_2 A_2 + \dots + x_k A_k
$
of $n - \Pi$. To get a representation of $n$, we can add $a_j$ to $x_j$ for any $j$. Thus $n$ has at least $k$ representations.
This finishes the base case.

Now assume that $t>1$ and $(k+t-2) \Pi - \Sigma$ has exactly the $\binom{ k+t-3 }{ k-1 }$ representations realized by \eqref{star3} with $n_1 + n_2 + \dots + n_k = t-2$.

To any of these representations, we can add $\Pi = a_j \, A_j$, which gives a representation of $(k+t-1) \Pi - \Sigma$ of the form \eqref{star3} with $n_1 + n_2 + \dots + n_k = t-1$. This accounts for $\binom{ k+t-2 }{ k-1 }$ representations of $(k+t-1) \Pi - \Sigma$.
On the other hand, any representation
\begin{equation}\label{newrepeq}
  x_1 A_1 + x_2 A_2 + \dots + x_k A_k
\end{equation}
of $(k+t-1) \Pi - \Sigma$ that is \emph{not} of this form will have an $x_j \not\equiv -1 \pmod{a_j}$. So then $x_j = q a_j + r$ for some integers $q \ge 0$ and $0 \le r \le a_j - 2$.
If $q>0$ then 
\[
  x_1 A_1 + x_2 A_2 + \dots + \left( (q-1) a_j + r \right)  A_j + \dots + x_k A_k
\]
is a representation of $(k+t-2) \Pi - \Sigma$ that violates \eqref{starf} and the induction hypothesis.
If $q=0$ then $x_j = r$ and so there must be an $x_i \ge a_i$ (otherwise \eqref{newrepeq} would be $< k\, \Pi - \Sigma$). 
But then
\[
  x_1 A_1 + x_2 A_2 + \dots + (x_i - a_i) A_i + \dots + r A_j + \dots + x_k A_k
\]
is a representation of $(k+t-2) \Pi - \Sigma$ that violates \eqref{starf} and the induction hypothesis.
Thus no ``new" representation of the form \eqref{newrepeq} can exist for $(k+t-1) \Pi - \Sigma$, and so there are exactly $\binom{ k+t-2 }{ k-1 }$ representations of $(k+t-1) \Pi - \Sigma$.

Finally, we prove that if $n > (k+t-1) \Pi - \Sigma$, then $n$ has at least $\binom{ k+t-1 }{ k-1 }$ representations.
Since $n > g_0 + t \Pi$, there is a representation
$
  x_1 A_1 + x_2 A_2 + \dots + x_k A_k
$
of $n - t \Pi$. To get a representation of $n$, we can add $a_j$ to $x_j$ for any $j$, a total of $t$ times. Thus $n$ has at least $\binom{ k+t-1 }{ k-1 }$ representations.
\end{proof}


\section{A Brauer--Shockley Lemma for $\g_s$}\label{lemmasection}

Experts on the Frobenius problem will recognize the $s=0$ case of the following lemma from many papers on the Frobenius problem, starting with~\cite{brauershockley}.

\begin{lemma}\label{lemma1}
Let $a_{1},a_{2}, \dots, a_{k}$ be positive integers with  $\gcd (a_{1},a_{2}, \dots,a_{k})=1$, and let $n_{j,s}$ denote the least nonnegative integer congruent to $j \pmod{a_1}$ that has more than $s$ representations. Then 
\[
  \g_{s}(a_{1},a_{2}, \dots, a_{k}) = \max_{0 \le j \le a_1-1} n_{j,s}-a_1 \, .
\]
\end{lemma}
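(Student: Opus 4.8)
The plan is to adapt the classical Brauer–Shockley argument, which sorts residues modulo $a_1$. First I would fix a residue class $j \pmod{a_1}$ and observe that an integer $x$ with $x \equiv j \pmod{a_1}$ is a nonnegative integer combination $m_1 a_1 + m_2 a_2 + \dots + m_k a_k$ if and only if $m_2 a_2 + \dots + m_k a_k \le x$ and $m_2 a_2 + \dots + m_k a_k \equiv j \pmod{a_1}$; the point is that once the $a_2,\dots,a_k$ part is chosen to land in the right residue class and to be $\le x$, the remaining gap is a nonnegative multiple of $a_1$, so $m_1$ is forced. Hence the number of representations of $x$ equals the number of tuples $(m_2,\dots,m_k) \in \Z_{\ge 0}^{k-1}$ with $m_2 a_2 + \dots + m_k a_k \equiv j \pmod{a_1}$ and $m_2 a_2 + \dots + m_k a_k \le x$. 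The crucial monotonicity consequence: if $x \equiv x' \equiv j \pmod{a_1}$ and $x \le x'$, then every representation of $x$ gives one of $x'$, so the number of representations is nondecreasing along each residue class as we increase $x$ in steps of $a_1$.

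Next I would use this monotonicity to pin down the behavior within a single residue class. By definition, $n_{j,s}$ is the least nonnegative integer $\equiv j \pmod{a_1}$ with more than $s$ representations; such an integer exists because for large $x$ in the class the count is unbounded (indeed $\gcd(a_1,\dots,a_k)=1$ guarantees all sufficiently large integers are representable, and adding multiples of $a_1 a_2$, say, produces arbitrarily many representations). By monotonicity, for $x \equiv j \pmod{a_1}$ we have: $x$ has at most $s$ representations exactly when $x < n_{j,s}$, i.e. when $x \le n_{j,s} - a_1$. Therefore the largest integer in the class $j$ that has at most $s$ representations is precisely $n_{j,s} - a_1$.

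Finally I would assemble the residue classes. An integer has at most $s$ representations if and only if it does so within its own residue class modulo $a_1$, and $\g_s$ is by definition the largest integer with at most $s$ representations overall. Since the largest such integer in class $j$ is $n_{j,s} - a_1$, the overall maximum is $\max_{0 \le j \le a_1 - 1} \left( n_{j,s} - a_1 \right) = \max_{0 \le j \le a_1-1} n_{j,s} - a_1$, which is the claimed formula. One should also note $\g_s$ is well-defined (finite): only finitely many integers have at most $s$ representations, again because eventually every residue class has strictly increasing and unbounded representation counts.

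The main obstacle, such as it is, is conceptual rather than technical: one must be careful that the reduction ``count representations of $x$ = count of tuples $(m_2,\dots,m_k)$ with bounded, residue-constrained value'' is a genuine bijection, and that the monotonicity is strict enough in the right direction — it is only nondecreasing, not strictly increasing, which is exactly why the statement is phrased with ``more than $s$'' and ``at most $s$'' rather than equalities. The contrast the authors flag — that no analogous clean lemma is known for $g_s$ — is precisely because the count need not be strictly monotone along a residue class, so ``exactly $s$ representations'' can fail to occur or can occur non-contiguously, whereas ``at most $s$'' is automatically downward-closed along each class.
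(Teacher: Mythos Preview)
Your argument is correct and follows essentially the same Brauer--Shockley residue-class approach as the paper's proof; you simply make explicit the monotonicity of the representation count along each residue class modulo $a_1$ (and the attendant well-definedness of $n_{j,s}$ and $\g_s$), which the paper's two-line proof uses implicitly when it passes from $x \ge n_{m,s}$ to ``$x$ has more than $s$ representations.''
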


We included this result here because the $s=0$ case proved useful in many special instances of the Frobenius problem, and because we believe it gives credence to the definition of $\g_s$, in that we are unable to prove a version of Lemma \ref{lemma1} for~$g_s$.
It is an interesting open problem to study in what instances $g_s < \g_s$.

\begin{proof}
The number $\max_{0 \le j \le a_1-1} n_{j,s}-a_1$ has at most $s$ representations, since each $n_{j,s}-a_1$ has by definition at most $s$ representations.

Now let $x > \max_{0 \le j \le a_1-1} n_{j,s}-a_1$ and, say, $x \equiv m \pmod{a_1}$. 
Then $x \equiv n_{m,s} \pmod{a_1}$ and thus $x \ge n_{m,s}$. So $x$ has more than $s$ representations by definition.
\end{proof}

In the literature on the Frobenius problem, the number $n \left( a_1, a_2, \dots, a_k \right)$ of nonrepresentable integers plays a role almost as famous as that of $g \left( a_1, a_2, \dots, a_k \right)$.
In analogy with our definition of $\g_s$, we denote by  $\n_{s}(a_{1},a_{2}, \dots,a_{k})$ the number of nonnegative integers that have at most $s$ representations.
The $s=0$ case of the following result is due to Selmer \cite[p.~3]{selmer}. We leave the proof, which is similar to that of Lemma \ref{lemma1}, to the reader.

\begin{lemma}\label{lemma2}
With the same notation and conditions as in Lemma \ref{lemma1},
\[
  \n_{s}(a_{1},a_{2}, \dots,a_{k}) = \frac{1}{a_1}\sum _{j=0}^{a_{1}-1}n_{j,s}-\frac{a_{1}-1}{2} \, .
\]
\end{lemma}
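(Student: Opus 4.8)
The plan is to mimic the proof of Lemma \ref{lemma1}, replacing the counting of representable residues with a weighted count. The key observation is a partition of $\Z_{\ge 0}$ by residue class mod $a_1$: for each $j$ with $0 \le j \le a_1 - 1$, the nonnegative integers congruent to $j$ are exactly $j, j + a_1, j + 2a_1, \dots$, and among these the ones with \emph{at most} $s$ representations are precisely those strictly below $n_{j,s}$ (this is the content of the argument in Lemma \ref{lemma1}: once we reach $n_{j,s}$ we have more than $s$ representations, and adding further multiples of $a_1$ only increases the number of representations, so the property ``at most $s$ representations'' is downward closed within each residue class). Therefore the count of such integers in residue class $j$ equals $\frac{n_{j,s} - j}{a_1}$, since these are $j, j+a_1, \dots, n_{j,s} - a_1$.

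First I would establish that ``at most $s$ representations'' is downward closed along each arithmetic progression $\{j + \ell a_1 : \ell \ge 0\}$; this follows because any representation of $x$ yields a representation of $x + a_1$ by adding $a_1$ copies of the coefficient of $a_1$ (or, more carefully, noting $a_1 = $ a nonnegative combination trivially, so $x+a_1$ has at least as many representations as $x$), hence $n_{j,s}$ is well-defined and the set of small-representation integers in class $j$ is the initial segment $\{j, j+a_1, \dots, n_{j,s}-a_1\}$. Then I would sum:
\[
  \n_s(a_1, \dots, a_k) = \sum_{j=0}^{a_1-1} \frac{n_{j,s} - j}{a_1} = \frac{1}{a_1}\sum_{j=0}^{a_1-1} n_{j,s} - \frac{1}{a_1}\sum_{j=0}^{a_1-1} j = \frac{1}{a_1}\sum_{j=0}^{a_1-1} n_{j,s} - \frac{a_1-1}{2} \, ,
\]
using $\sum_{j=0}^{a_1-1} j = \binom{a_1}{2} = \frac{a_1(a_1-1)}{2}$.

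The only subtle point — and the one place the hypothesis $\gcd(a_1, \dots, a_k) = 1$ enters — is ensuring each $n_{j,s}$ exists, i.e., that every residue class mod $a_1$ contains integers with more than $s$ representations; this is standard (sufficiently large integers have arbitrarily many representations once $\gcd = 1$, e.g. by the Chicken McNugget/Brauer--Shockley circle of ideas already invoked), so there is no real obstacle. I expect the main (minor) care needed is just being precise that $n_{j,s}$ is congruent to $j$ and is the \emph{least} such integer with more than $s$ representations, so that the integers with at most $s$ representations in class $j$ are exactly $j, j+a_1, \dots, n_{j,s}-a_1$, giving the count $(n_{j,s}-j)/a_1$; everything else is the elementary summation above.
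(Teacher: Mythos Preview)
Your argument is correct and is exactly the proof the paper has in mind: the paper actually omits the proof of Lemma~\ref{lemma2}, saying only that it ``is similar to that of Lemma~\ref{lemma1}'' and leaving it to the reader. Your residue-class partition and the monotonicity observation (that $x\mapsto x+a_1$ injects representations via $(m_1,\dots,m_k)\mapsto(m_1+1,m_2,\dots,m_k)$) are precisely the intended ingredients; the only slight wobble is the phrase ``adding $a_1$ copies of the coefficient of $a_1$,'' where you mean adding $1$ to the coefficient $m_1$, but your parenthetical already corrects this.
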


As an illustration of the applicability of Lemma \ref{lemma1}, we include a last result, which generalizes well-known theorems of Johnson \cite[Theorem 2]{johnson} and R{\o}dseth~\cite[Lemma 1]{rodseth1}.

\begin{lemma}
Let $a_{1},a_{2}, \dots, a_{k}$ be positive integers with  $\gcd (a_{1},a_{2}, \dots,a_{k})=1$.
If  $\gcd(a_{2},a_{3}, \dots,a_{k})=d$, let  $a_{j}=d\, a_{j}'$ for $2 \le j \le k$. Then
\begin{align*}
  \g_{s}(a_{1}, a_{2}, \dots, a_{k}) &= d \, \g_{s}(a_{1},a_{2}',a_{3}', \dots,a_{k}')+a_{1}(d-1) \, , \\
  \n_{s}(a_{1}, a_{2}, \dots, a_{k}) &= d \, \n_{s}(a_{1},a_{2}',a_{3}', \dots,a_{k}')+\tfrac{1}{2}(a_{1}-1)(d-1) \, .
\end{align*}
\end{lemma}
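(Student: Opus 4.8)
The plan is to imitate the classical proofs of Johnson's and R\o dseth's theorems, but to route everything through Lemma \ref{lemma1} and Lemma \ref{lemma2} instead of through the $s=0$ Brauer--Shockley lemma. Write $\mathbf a=(a_1,a_2,\dots,a_k)$ and $\mathbf a'=(a_1,a_2',\dots,a_k')$, and let $r_{\mathbf a}(n)$, $r_{\mathbf a'}(n)$ denote the corresponding numbers of representations; write $n_{j,s}(\mathbf a)$, $n_{j,s}(\mathbf a')$ for the quantities of Lemma \ref{lemma1} attached to each tuple (both have first entry $a_1$). First I would record the two coprimality facts that make the argument run: since $d=\gcd(a_2,\dots,a_k)$ divides each of $a_2,\dots,a_k$ we get $\gcd(a_1,d)=\gcd(a_1,\dots,a_k)=1$, so $d$ is invertible modulo $a_1$; and $\gcd(a_2',\dots,a_k')=1$, hence $\gcd(a_1,a_2',\dots,a_k')=1$ and Lemmas \ref{lemma1} and \ref{lemma2} apply to $\mathbf a'$ as well. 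The whole proof then reduces to the single \emph{local identity}
\[
  n_{j,s}(\mathbf a)=d\cdot n_{j',s}(\mathbf a')\,,\qquad\text{where }j'\equiv j\,d^{-1}\pmod{a_1},\ 0\le j'\le a_1-1\,.
\]
Granting this, both displayed formulas fall out: the map $j\mapsto j'$ permutes $\{0,1,\dots,a_1-1\}$, so $\max_j n_{j,s}(\mathbf a)=d\max_{j'}n_{j',s}(\mathbf a')$ and $\sum_j n_{j,s}(\mathbf a)=d\sum_{j'}n_{j',s}(\mathbf a')$, and feeding these into Lemmas \ref{lemma1} and \ref{lemma2} and simplifying yields exactly $d\,\g_s(\mathbf a')+a_1(d-1)$ and $d\,\n_s(\mathbf a')+\tfrac12(a_1-1)(d-1)$.

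The engine behind the local identity is a representation-preserving bijection. Fix $n\ge 0$, put $c:=n\,a_1^{-1}\bmod d\in\{0,1,\dots,d-1\}$, and set $M(n):=(n-c\,a_1)/d$, an integer because $d\mid n-c\,a_1$; note $M(n)\equiv n\,d^{-1}\pmod{a_1}$ and $M(n)\le n/d$. Given a representation $x_1a_1+x_2a_2+\dots+x_ka_k=n$, reducing modulo $d$ forces $x_1\equiv c\pmod d$, so $x_1=c+d\ell$ with $\ell\ge 0$, and dividing $c\,a_1+d\ell a_1+d(x_2a_2'+\dots+x_ka_k')=n$ by $d$ gives $\ell a_1+x_2a_2'+\dots+x_ka_k'=M(n)$; conversely each representation of $M(n)$ by $\mathbf a'$ produces one of $n$ by $\mathbf a$ by reversing the substitution. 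Hence $r_{\mathbf a}(n)=r_{\mathbf a'}(M(n))$, with both sides $0$ when $M(n)<0$.

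Next I would upgrade this to the local identity. The extra ingredient needed for general $s$ (one does not need it for $s=0$) is that $r_{\mathbf a'}$ is \emph{non-decreasing along each residue class modulo $a_1$}, because incrementing the coefficient of $a_1$ injects the representations of $M$ into those of $M+a_1$. Therefore, inside the class $j'\pmod{a_1}$, the integers with more than $s$ representations form the tail $\{n_{j',s}(\mathbf a'),\,n_{j',s}(\mathbf a')+a_1,\,\dots\}$, so for $n\equiv j\pmod{a_1}$ we have $r_{\mathbf a}(n)>s\iff r_{\mathbf a'}(M(n))>s\iff M(n)\ge n_{j',s}(\mathbf a')$ (using $M(n)\equiv j'\pmod{a_1}$). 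Now $M\big(d\,n_{j',s}(\mathbf a')\big)=n_{j',s}(\mathbf a')$ exactly, since $d\mid d\,n_{j',s}(\mathbf a')$ makes $c=0$; as $d\,n_{j',s}(\mathbf a')\equiv j\pmod{a_1}$, this shows $n_{j,s}(\mathbf a)\le d\,n_{j',s}(\mathbf a')$. Conversely, any $n\equiv j\pmod{a_1}$ with $r_{\mathbf a}(n)>s$ satisfies $n/d\ge M(n)\ge n_{j',s}(\mathbf a')$, i.e. $n\ge d\,n_{j',s}(\mathbf a')$. This proves the local identity, hence the lemma.

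I expect the only genuine subtlety to be the monotonicity observation combined with careful bookkeeping of the residue $c$: one must check that $c$ depends only on $n\bmod d$, that $\ell$ stays nonnegative, and that $M$ behaves correctly on negative inputs, so that ``$r_{\mathbf a}(n)>s$'' transfers to ``$M(n)\ge n_{j',s}(\mathbf a')$'' with no off-by-$a_1$ slip. Everything past the local identity is routine algebra with Lemmas \ref{lemma1} and \ref{lemma2}, and the $s=0$ specialization recovers the theorems of Johnson and R\o dseth as advertised.
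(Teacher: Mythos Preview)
Your proposal is correct and follows exactly the paper's approach: establish the set equality $\{\,n_{j,s}:0\le j\le a_1-1\,\}=\{\,d\,n'_{j,s}:0\le j\le a_1-1\,\}$ (your ``local identity'' is a pointwise refinement of this, via the permutation $j\mapsto j'=jd^{-1}\bmod a_1$), then feed the resulting $\max$ and $\sum$ into Lemmas \ref{lemma1} and \ref{lemma2}. The paper leaves that set equality as ``a fun exercise to check''; your bijection $n\leftrightarrow M(n)$ and the monotonicity remark are precisely the details of that exercise.
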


This lemma does have a counterpart for $g_s$ \cite{brownetalfrobenius}.

\begin{proof}
As in Lemma \ref{lemma1},
let $n_{j,s}$ denote the least nonnegative integer congruent to $j \pmod{a_1}$ that has more than $s$ representations in terms of $a_{1}, a_{2}, \dots, a_{k}$, and similarly
let $n_{j,s}'$ denote the least nonnegative integer congruent to $j \pmod{a_1}$ that has more than $s$ representations in terms of $a_{1},a_{2}',a_{3}', \dots,a_{k}'$. 
It is a fun exercise to check that
\[
  \left\{ n_{ j,s } : 0 \le j \le a_1-1 \right\} = \left\{ d \, n_{ j,s }' : 0 \le j \le a_1-1 \right\}
\]
and so by Lemma \ref{lemma1},
\begin{align*}
  g_{s}(a_{1},a_{2}, \dots,a_{k})
  &= \max_{0 \le j \le a_1-1}{n_{j,s}-a_1}
   = \max_{0 \le j \le a_1-1}d\, n_{j,s}'-a_{1} 
   = d \biggl( \max_{0 \le j \le a_1-1}n_{j,s}'-a_{1} \biggr) + a_{1}(d-1) \\
  &= d\, g_{s}(a_{1},a_{2}',a_{3}', \dots,a_{k}')+a_{1}(d-1) \, ,
\end{align*}
and by Lemma \ref{lemma2},
\begin{align*}
  n_{s}(a_{1},a_{2}, \dots,a_{k})
  &= \frac{1}{a_{1}}\sum _{j=0}^{a_{1}-1}n_{{j,s}}-\frac{a_{1}-1}{2}
   = d \left( \frac{1}{a_{1}}\sum _{j=0}^{a_{1}-1}n_{{j,s}}'-\frac{a_{1}-1}{2} \right) + \frac{1}{2}(a_{1}-1)(d-1)\\
  &= d \, n_{s}(a_{1},a_{2}',a_{3}', \dots,a_{k}')+\frac{1}{2}(a_{1}-1)(d-1). \qedhere
\end{align*}
\end{proof}


\bibliographystyle{amsplain}

\begin{thebibliography}{10}

\bibitem{frobnote}
Matthias Beck and Sinai Robins, \emph{A formula related to the {F}robenius
  problem in two dimensions}, Number theory (New York, 2003), Springer, New
  York, 2004, pp.~17--23, {\tt arXiv:math.NT/0204037}.

\bibitem{brauershockley}
Alfred Brauer and James~E. Shockley, \emph{On a problem of {F}robenius}, J.
  Reine Angew. Math. \textbf{211} (1962), 215--220.

\bibitem{brownetalfrobenius}
Alexander Brown, Eleandor Dannenberg, Jennifer Fox, Joshua Hanna, Katherine
  Keck, Alexander Moore, Zachary Robbins, Brandon Samples, and James
  Stankewicz, \emph{On a generalization of the {F}robenius number}, Preprint
  ({\tt arXiv:1001.0207v2}), 2010.

\bibitem{johnson}
S.~M. Johnson, \emph{A linear diophantine problem}, Canad. J. Math. \textbf{12}
  (1960), 390--398.

\bibitem{ramirezbook}
Jorge~L. Ram{\'{\i}}rez~Alfons{\'{\i}}n, \emph{The {D}iophantine {F}robenius
  problem}, Oxford Lecture Series in Mathematics and its Applications, vol.~30,
  Oxford University Press, Oxford, 2005.

\bibitem{rodseth1}
{\O}ystein~J. R{\o}dseth, \emph{On a linear {D}iophantine problem of
  {F}robenius}, J. Reine Angew. Math. \textbf{301} (1978), 171--178.

\bibitem{selmer}
Ernst~S. Selmer, \emph{On the linear {D}iophantine problem of {F}robenius}, J.
  Reine Angew. Math. \textbf{293/294} (1977), 1--17.

\bibitem{shallitstankewiczfrobenius}
Jeffrey Shallit and James Stankewicz, \emph{Unbounded discrepancy in
  {F}robenius numbers}, Preprint ({\tt arXiv:1003.0021v1}), 2010.

\bibitem{sylvester}
James~J. Sylvester, \emph{Mathematical questions with their solutions},
  Educational Times \textbf{41} (1884), 171--178.

\bibitem{tripathifrobeniusextension}
Amitabha Tripathi, \emph{On a linear {D}iophantine problem of {F}robenius},
  Integers \textbf{6} (2006), A14, 6 pp. (electronic).

\end{thebibliography}

\def\cprime{$'$} \def\cprime{$'$}
\providecommand{\bysame}{\leavevmode\hbox to3em{\hrulefill}\thinspace}
\providecommand{\MR}{\relax\ifhmode\unskip\space\fi MR }
\providecommand{\MRhref}[2]{%
  \href{http://www.ams.org/mathscinet-getitem?mr=#1}{#2}
}
\providecommand{\href}[2]{#2}

\setlength{\parskip}{0cm} 
\end{document}